\newtheorem{theorem}{Theorem}
\newtheorem{proposition}[theorem]{Proposition}
\newtheorem{lemma}[theorem]{Lemma}
\theoremstyle{definition}
\theoremstyle{remark}
\newtheorem*{remarks*}{Remarks}
\newtheorem*{remark*}{Remark}
\newcommand{\1}[1]{{\mathbf 1}{\{#1\}}}
\newcommand{\R}{{\mathbb R}}
\newcommand{\N}{{\mathbb N}}
\newcommand{\ZP}{{\mathbb Z}_+}
\newcommand{\RP}{{\mathbb R}_+}
\newcommand{\Sp}[1]{{\mathbb S}^{#1}}
\newcommand{\X}{{\mathbb X}}
\DeclareMathOperator{\Exp}{\mathbb{E}}
\renewcommand{\Pr}{{\mathbb P}}
\DeclareMathOperator{\trace}{tr}
\newcommand{\tra}{{\scalebox{0.6}{$\top$}}}
\newcommand{\eps}{\varepsilon}
\newcommand{\ud}{{\mathrm d}}
\newcommand{\cB}{{\mathcal B}}
\newcommand{\cC}{{\mathcal C}}
\newcommand{\cD}{{\mathcal D}}
\newcommand{\cF}{{\mathcal F}}
\newcommand{\tX}{\widetilde X}
\newcommand{\as}{\ \text{a.s.}}
\newcommand{\toP}{\overset{\mathrm{p}}{\longrightarrow}}
\newcommand{\bx}{{\mathbf{x}}}
\newcommand{\by}{{\mathbf{y}}}
\newcommand{\bu}{{\mathbf{u}}}
\newcommand{\be}{{\mathbf{e}}}
\newcommand{\0}{{\mathbf{0}}}
\newcommand{\bra}{\langle}
\newcommand{\ket}{\rangle}
\newcommand{\Besq}{{\mathrm{BESQ}}}
\def\namedlabel#1#2{\begingroup  
    (#2)%
    \def\@currentlabel{#2}%
    \phantomsection\label{#1}\endgroup
}
\begin{document}

\title{A radial invariance principle\\ for non-homogeneous random walks} 
\author{Nicholas Georgiou\thanks{Department of Mathematical Sciences, Durham University, South Road, Durham DH1 3LE, UK.}
\thanks{supported by the Heilbronn Institute for Mathematical Research.}
\and Aleksandar Mijatovi\'c\thanks{Department of Mathematics, King's College London, \& 
The Alan Turing Institute, UK. E-mail: aleks.mijatovic@gmail.com} 
\thanks{supported in part by the EPSRC grant EP/P003818/1 and the Programme on Data-Centric Engineering  funded by
Lloyd's Register Foundation.}
\and  Andrew R.\ Wade\footnotemark[1]}

\date{\today}
\maketitle

\begin{abstract}
Consider non-homogeneous zero-drift random walks in $\R^d$, $d \geq 2$,
with the asymptotic increment covariance matrix $\sigma^2 (\bu)$
satisfying 
$\bu^\tra \sigma^2 (\bu) \bu = U$ and $\trace \sigma^2 (\bu) = V$
in all 
in directions $\bu\in\Sp{d-1}$
for some positive constants $U<V$.
In this paper we establish weak 
convergence of the radial component of the walk to 
a Bessel process with dimension $V/U$. This can be viewed as an extension of 
an invariance principle of Lamperti.
\end{abstract}

\medskip

\noindent
{\em Key words:}  Non-homogeneous random walk; invariance principle; Bessel process.

\medskip

\noindent
{\em AMS Subject Classification:} 60J05, 60F17 (Primary)  60J60 (Secondary)

\section{Introduction and results}
\label{sec:results}
 
A spatially homogeneous random walk on $\R^d$ whose increments have zero mean
and finite second moments is recurrent if and only if $d \leq 2$. In~\cite{gmmw}
a class of spatially non-homogeneous random walks (Markov chains) exhibiting anomalous recurrence
behaviour was described; the increments for such walks again have zero mean, but have a covariance
that depends on the current position in a certain way. In any dimension $d \geq 2$, such walks can be recurrent or transient, depending on the model parameters. 

The goal of this note is to establish an invariance principle for the radial component of the walks
studied in~\cite{gmmw}. The result can be seen as an extension of work of Lamperti~\cite{lamp2}, and is also an important ingredient in the much more involved proof of a full invariance principle that is the subject of forthcoming work. We explain these points in more detail once we have given a precise description of the model and stated the main result.

We work in $\R^d$, $d \geq 2$. 
 Write $\0$ for the origin in $\R^d$,
and let $\| \, \cdot \, \|$ denote the Euclidean norm and $\bra \,\cdot\, ,\!\, \cdot\, \ket$ the Euclidean inner product on $\R^d$.
Write $\Sp{d-1} := \{ \bu \in \R^d : \| \bu \| = 1\}$ for the unit sphere in $\R^d$.
For $\bx \in \R^d \setminus \{ \0 \}$, set $\hat \bx := \bx / \| \bx \|$.
 For definiteness, vectors $\bx \in \R^d$
are viewed as  column vectors throughout.

We now define $X=(X_n , n \in \ZP)$, 
a discrete-time, time-homogeneous Markov process on a (non-empty, unbounded) subset $\X$ of $\R^d$.
Formally, $(\X,\cB_\X)$ is a measurable space, $\X$ is a Borel
subset of $\R^d$, and $\cB_\X$ is the $\sigma$-algebra
of all $B \cap \X$ for $B$ a Borel set  in $\R^d$.
Suppose that $X_0$ is some fixed (i.e., non-random) point in $\X$. 
Write
\[ \Delta_n := X_{n+1} - X_n \]
for the increments of $X$. By assumption, given $X_0, \ldots, X_n$,
the law of $\Delta_n$ depends only on $X_n$ (and not on $n$);
so often we ease notation by taking $n=0$ and writing just $\Delta$ for $\Delta_0$.
We also use the shorthand $\Pr_\bx [ \, \cdot \, ] = \Pr [ \, \cdot \, \! \mid X_0 = \bx]$
for probabilities when the walk is started from $\bx \in \X$; similarly we use $\Exp_\bx$ for the
corresponding expectations.

We make the following moments assumption:
\begin{description}
\item[\namedlabel{ass:moments}{A0}] Suppose that 
$\sup_{\bx \in \X} \Exp_\bx [ \| \Delta \|^4  ] < \infty$.
\end{description}
The assumption \eqref{ass:moments} ensures that $\Delta$ has a well-defined mean vector $\mu(\bx) := \Exp_\bx [ \Delta ]$, and we suppose that the random walk has \emph{zero drift}:
\begin{description}
\item[\namedlabel{ass:zero_drift}{A1}] Suppose that $\mu(\bx) = \0$ for all $\bx \in \X$.
\end{description}
The assumption \eqref{ass:moments} also ensures that $\Delta$ has a well-defined covariance matrix, which we denote by
$
 M (\bx) := \Exp_\bx [ \Delta \Delta^{\!\tra} ],
$
where $\Delta$ is viewed as a column vector. 
To rule out pathological cases, we assume that $\Delta$ is \emph{uniformly non-degenerate}, in the following sense.
\begin{description}
\item[\namedlabel{ass:unif_ellip}{A2}] There exists $v > 0$ such that $\trace M(\bx) = \Exp_\bx[ \| \Delta \|^2 ] \geq v$ for all $\bx \in \X$.
\end{description}
Write $\| \, \cdot \, \|_{\rm op}$ for the matrix (operator) norm given by $\| M \|_{\rm op} = \sup_{\bu \in \Sp{d-1}} \| M \bu \|$.
The following assumption on the asymptotic stability of the covariance structure of the process  along rays is central.
\begin{description}
\item[\namedlabel{ass:cov_limit}{A3}] Suppose that there exists a positive-definite matrix function $\sigma^2$
with domain $\Sp{d-1}$ such that, as $r \to \infty$,
\[
\eps(r) := \sup_{\bx \in \X : \| \bx \| \geq r} \| M( \bx ) - \sigma^2 ( \hat\bx ) \|_{\rm op} \to 0 .
\]
\end{description}
Finally, we assume the following.
\begin{description}
\item[\namedlabel{ass:cov_form}{A4}] 
Suppose that there exist constants $U, V$ with $0 < U < V < \infty$ such that, for all
$\bu \in \Sp{d-1}$, $\bu^\tra \sigma^2 ( \bu ) \bu = U$ and $\trace \sigma^2(\bu)  = V$. In the case $2U=V$, suppose in addition that $\eps$
as defined in~\eqref{ass:cov_limit} satisfies $\eps(r) = O (r^{-\delta})$ for some $\delta>0$.
\end{description}
Informally, $V$ quantifies the total variance of the increments, while $U$ quantifies the
variance in the radial direction; necessarily $U \leq V$. The final condition in~\eqref{ass:cov_form}
is necessary to deal with the critical parameter case.
 
The main result of~\cite{gmmw} stated that under the assumptions \eqref{ass:moments}--\eqref{ass:cov_form},
we have that (i) if $2U < V$, then $\lim_{n \to \infty} \| X_n \| = +\infty$, a.s.; and (ii) if $2U \geq V$, then $\liminf_{n \to \infty} \| X_n \| \leq r_0$, a.s., for some constant $r_0 \in \RP$.

 For $n \in \ZP$ and $t \in \RP$, define
\begin{equation}
\label{eq:scaled_walk}
\tX_n (t) := n^{-1/2} X_{\lfloor nt \rfloor} .\end{equation} 
For each $n$, we view $\tX_n$ as an element of the  space
$\cD_d := \cD(\RP ; \R^d)$ of functions $f : \RP \to \R^d$ that are right-continuous
and have left limits, endowed with the Skorokhod metric: see e.g.~\cite[\S 3.5]{ek}.

\begin{theorem}
\label{thm:radial_invariance}
Suppose that \eqref{ass:moments}--\eqref{ass:cov_form} 
hold. Without loss of generality assume that $U =1$. Then $\| \tX_n \|$ converges weakly to the $V$-dimensional Bessel process
started at $0$.
\end{theorem}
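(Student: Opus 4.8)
The plan is to work with the squared radial process $Y_n := \|X_n\|^2$, whose increments have a particularly clean structure, and to recover the statement for $\|\tX_n\|$ by taking square roots at the end. Writing $\Delta = X_{n+1}-X_n$ and expanding $Y_{n+1}-Y_n = 2\bra X_n,\Delta\ket + \|\Delta\|^2$, the zero-drift assumption~\eqref{ass:zero_drift} gives, with $\cF_n := \sigma(X_0,\dots,X_n)$,
\[
\Exp[ Y_{n+1}-Y_n \mid \cF_n ] = \Exp_{X_n}[\|\Delta\|^2] = \trace M(X_n),
\]
while the dominant contribution to the conditional second moment is $\Exp_{X_n}[4\bra X_n,\Delta\ket^2] = 4\,\hat X_n^\tra M(X_n)\hat X_n\,\|X_n\|^2$. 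By~\eqref{ass:cov_limit}--\eqref{ass:cov_form} and the normalisation $U=1$, these converge, as $\|X_n\|\to\infty$, to $V$ and to $4\|X_n\|^2 = 4 Y_n$ respectively; the uniform fourth moment bound~\eqref{ass:moments} controls the cross terms and all higher-order contributions. Thus $Y_n$ is a Lamperti-type process with asymptotic drift $V$ and asymptotic infinitesimal variance $4Y_n$, which is exactly the signature of the squared Bessel process $\Besq(V)$, whose generator is $\cL f(y) = V f'(y) + 2 y f''(y)$.

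Next I would establish the invariance principle for $\tilde Y_n(t) := n^{-1} Y_{\lfloor nt\rfloor} = \|\tX_n(t)\|^2$ via the martingale-problem method. Decompose $Y_n = Y_0 + A_n + N_n$ into its predictable compensator $A_n = \sum_{k<n}\Exp[Y_{k+1}-Y_k\mid\cF_k]$ and a martingale $N_n$ with predictable quadratic variation $\langle N\rangle_n = \sum_{k<n}\Var(Y_{k+1}\mid\cF_k)$. Under the diffusive rescaling the per-unit-time drift of $\tilde Y_n$ tends to $V$ and the density of $\langle N\rangle$ tends to $4\tilde Y_n$, so that for smooth test functions the discrete generators converge to $\cL$. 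I would then prove that $\{\tilde Y_n\}$ is tight — using~\eqref{ass:moments} for a Kolmogorov/Aldous-type moment criterion — and that every subsequential weak limit solves the martingale problem for $\cL$, the negligibility (Lindeberg) condition for the martingale part following again from the uniform fourth moments. Since the $\Besq(V)$ martingale problem is well posed — the stochastic differential equation $\ud Y = V \, \ud t + 2\sqrt{Y}\,\ud W$ has a pathwise-unique solution by the Yamada--Watanabe theorem — this identifies the limit uniquely as $\Besq(V)$ started at $0$ (note $\tilde Y_n(0) = n^{-1}\|X_0\|^2 \to 0$). Applying the continuous mapping theorem with the square-root map then yields $\|\tX_n\| = \sqrt{\tilde Y_n} \Rightarrow \sqrt{\Besq(V)}$, i.e.\ the $V$-dimensional Bessel process started at $0$.

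The main obstacle is the behaviour near the origin. The Lamperti estimates above are valid only for $\|X_n\|$ large (they are driven by $\eps(r)\to 0$ in~\eqref{ass:cov_limit}), whereas the target process starts at $0$ and, in the recurrent and critical regimes $2U\ge V$ (i.e.\ $V\le 2$), the walk returns to bounded neighbourhoods of $\0$ infinitely often. To control this I would show that the number of steps $k\le nt$ with $\|X_k\|$ below any fixed level is $o(n)$ with high probability, so that the contributions of this region to both $A_n$ and $\langle N\rangle_n$ vanish under rescaling and the limiting drift and quadratic variation are the correct functionals of $\tilde Y_n$ alone; this also forces the limit to inherit the instantaneously-reflecting boundary behaviour of $\Besq(V)$ at $0$. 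The delicate critical case $2U=V$ is precisely where the drift error $\trace M(X_n) - V$, of order $\eps(\|X_n\|)$, interacts most sensitively with the scaling, and here the extra hypothesis $\eps(r)=O(r^{-\delta})$ in~\eqref{ass:cov_form} is what guarantees that this error remains negligible in the limit.
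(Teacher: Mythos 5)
Your proposal follows essentially the same route as the paper's proof: pass to the squared radial process $Y_n=\|\tX_n\|^2$, decompose it into predictable compensator plus martingale, show the compensator and predictable quadratic-variation densities converge to $V$ and $4Y_n$ respectively, invoke well-posedness of the $\Besq^V(0)$ martingale problem via pathwise uniqueness and Yamada--Watanabe, and recover $\|\tX_n\|$ by the continuous mapping theorem applied to the square root. The occupation-time estimate you identify as the main obstacle is exactly the ``null'' property the paper imports from~\cite[Thm~2.3]{gmmw}, and that citation (rather than a direct estimate on the drift error) is where the critical-case hypothesis $\eps(r)=O(r^{-\delta})$ is actually consumed.
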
 

\begin{remarks*}
\label{rem:Bessel_Conv}
\begin{itemize}
\item[(i)] As $\| \tX_n \|$ is typically non-Markov,
Theorem~\ref{thm:radial_invariance} may be viewed as an extension of the invariance principle in~\cite[Thm~5.1]{lamp2},
describing the weak convergence of a sequence of non-negative Markov processes to a Bessel diffusion. 
\item[(ii)] 
It is well known that the stochastic differential equation (SDE) 
\begin{equation}
\label{eq:BesselSDE_V}
\ud \rho_t= \frac{V-1}{2\rho_t} \1{\rho_t\neq0}\ud t + \ud B_t,\quad \rho_0=x_0,
\end{equation}
satisfied by a $V$-dimensional Bessel process, 
does not possess uniqueness in law for any $V>1$ if $x_0=0$. 
Furthermore, if $V\in(1,2)$, uniqueness in law fails also in the case $x_0>0$
(see~\cite[Thm~3.2(iii)]{cherny} for both assertions). 
Hence in the proof of Theorem~\ref{thm:radial_invariance}, we work with the sequence
$\| \tX_n \|^2$ and show that it converges to the law $\Besq^V(0)$ of the squared Bessel process, 
which is uniquely determined by its SDE (see e.g.~\cite[Ch.~XI, Sec.~1]{ry}).
\item[(iii)] Theorem~\ref{thm:radial_invariance} provides a crucial step in the proof of a full invariance
principle for $\tX_n$, under additional conditions. This is the subject of forthcoming work.
Establishing a full invariance principle requires significantly more work, a large part
of which consists of characterising the limiting diffusion that can be viewed as a generalisation of the 
Bessel process to many dimensions. In the present paper this work is done for us since the limit is a (squared) Bessel
process.  
\end{itemize}
\end{remarks*}

\section{Proofs}
\label{sec:proof}

Recall that $\Delta_n :=X_{n+1}-X_n$.

\begin{lemma}
\label{lem:invariance_conditions1}
Under assumptions~\eqref{ass:moments}--\eqref{ass:cov_form}, 
for any $k\in\N$
the following limits hold:
\begin{align}
\label{eq:Lim_max_Delta}
& \lim_{n \to \infty} \frac{1}{n^\ell} \sup_{\bx\in\X} \Exp_\bx \Bigl[ \max_{0 \leq m \leq kn} \| \Delta_m \|^{2\ell} \Bigr]    =  0, \text{ for } \ell\in\{1,2\},\\
\label{eq:Lim_max_Delta_Xn}
& \lim_{n \to \infty} \frac{1}{n^2} \sup_{\bx\in\X\cap B} \Exp_\bx \Bigl[ \max_{0 \leq m \leq kn} \|\Delta_m \|^2 \|X_m \|^2\Bigr]   =  0,
\end{align}
where $B$ is any compact set in $\R^d$.
\end{lemma}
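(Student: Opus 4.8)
The plan is to control each maximum using assumption~\eqref{ass:moments}, which provides the uniform bound $C_4 := \sup_{\bx\in\X}\Exp_\bx[\|\Delta\|^4]<\infty$; by Jensen's inequality this also gives $\sup_{\bx\in\X}\Exp_\bx[\|\Delta\|^2]\le C_4^{1/2}$. Throughout, the Markov property lets one replace the conditional law of $\Delta_m$ given $X_0,\dots,X_m$ by the law under $\Exp_{X_m}$, so that these uniform moment bounds apply to every increment $\Delta_m$, uniformly in the starting point. Below $\max_m$ abbreviates $\max_{0\le m\le kn}$.

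For~\eqref{eq:Lim_max_Delta} with $\ell=2$ the crude bound $\max_m\|\Delta_m\|^4\le\sum_{m=0}^{kn}\|\Delta_m\|^4$ already suffices: taking expectations gives $\Exp_\bx[\max_m\|\Delta_m\|^4]\le(kn+1)C_4$, which after dividing by $n^2$ is $O(1/n)$. The case $\ell=1$ is more delicate, since the same argument only yields an $O(1)$ bound after dividing by $n$. Here I would integrate the tail: the union bound together with Markov's inequality applied to $\|\Delta_m\|^4$ gives $\Pr_\bx[\max_m\|\Delta_m\|^2>t]\le\min(1,(kn+1)C_4 t^{-2})$, and evaluating $\Exp_\bx[\max_m\|\Delta_m\|^2]=\int_0^\infty\Pr_\bx[\max_m\|\Delta_m\|^2>t]\,\ud t$ by splitting at $t_0=\sqrt{(kn+1)C_4}$ yields $\Exp_\bx[\max_m\|\Delta_m\|^2]\le 2\sqrt{(kn+1)C_4}=O(\sqrt n)$. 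Dividing by $n$ gives the claimed $O(n^{-1/2})\to 0$, uniformly in $\bx$.

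For~\eqref{eq:Lim_max_Delta_Xn} I would first decouple the two factors via $\max_m(\|\Delta_m\|^2\|X_m\|^2)\le(\max_m\|\Delta_m\|^2)(\max_m\|X_m\|^2)$ followed by the Cauchy--Schwarz inequality, reducing the problem to bounding $\Exp_\bx[\max_m\|\Delta_m\|^4]$ and $\Exp_\bx[\max_m\|X_m\|^4]$. The first is $O(n)$ by the computation just given. For the second, the essential point is that assumption~\eqref{ass:zero_drift} makes $X_m-X_0$ a martingale with respect to the natural filtration, so $\|X_m-X_0\|$ is a non-negative submartingale; Doob's $L^4$ maximal inequality gives $\Exp_\bx[\max_m\|X_m-X_0\|^4]\le(4/3)^4\,\Exp_\bx[\|X_{kn}-X_0\|^4]$, and a standard fourth-moment estimate for sums of martingale differences with uniformly bounded fourth moments (Burkholder--Davis--Gundy, or expanding $\Exp_\bx[\|X_{kn}-X_0\|^4]$ and using orthogonality of increments together with $C_4$) yields $\Exp_\bx[\|X_{kn}-X_0\|^4]=O(n^2)$. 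Since $X_0=\bx$ ranges over the compact set $B$, setting $R:=\sup_{\by\in B}\|\by\|$ and using $\|X_m\|^4\le 8\bigl(R^4+\|X_m-X_0\|^4\bigr)$ gives $\Exp_\bx[\max_m\|X_m\|^4]=O(n^2)$ uniformly over $\bx\in\X\cap B$. Combining, $\Exp_\bx[\max_m\|\Delta_m\|^2\|X_m\|^2]\le O(n)^{1/2}\,O(n^2)^{1/2}=O(n^{3/2})$, and dividing by $n^2$ gives $O(n^{-1/2})\to 0$.

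The main obstacle is the control of $\Exp_\bx[\max_m\|X_m\|^4]$. The naive triangle-inequality bound $\|X_m\|\le R+\sum_{j<m}\|\Delta_j\|$ only produces $\Exp_\bx[\max_m\|X_m\|^4]=O(n^4)$, which is too weak by a factor $n^2$ and would make the left-hand side of~\eqref{eq:Lim_max_Delta_Xn} diverge rather than vanish. It is precisely the zero-drift (martingale) structure from~\eqref{ass:zero_drift}, exploited through Doob's maximal inequality and the square-function estimate, that lowers this to the required $O(n^2)$ and makes the whole argument close.
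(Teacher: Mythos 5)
Your proof is correct, but it takes a genuinely different route from the paper's for the key limit \eqref{eq:Lim_max_Delta_Xn}. The $\ell=2$ case of \eqref{eq:Lim_max_Delta} is handled identically (max bounded by sum); for $\ell=1$ your tail-integration argument is valid, though the paper gets the same $O(\sqrt{n})$ bound in one line by applying Lyapunov's inequality, $\Exp_\bx[\max_m \|\Delta_m\|^2] \leq (\Exp_\bx[\max_m \|\Delta_m\|^4])^{1/2}$, to the $\ell=2$ bound. The real divergence is in \eqref{eq:Lim_max_Delta_Xn}: the paper truncates the increments at level $n^\gamma$, $\gamma\in(0,1/2)$, writing $\|\Delta_m\|^2 \leq n^{2\gamma} + \|\Delta_m\|^2\1{\|\Delta_m\|>n^\gamma}$, so that the maximum splits into $n^{2\gamma}\max_m\|X_m\|^2$ (controlled by Doob's $L^2$ inequality plus its Lemma~\ref{lem:moment_bound}) and a sum whose summands carry a factor $\Exp[\|\Delta\|^2\1{\|\Delta\|>n^\gamma}] \leq C n^{-2\gamma}$; optimising $\gamma$ gives the same $O(n^{3/2})$ that you obtain. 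You instead decouple the maximum of the product into a product of maxima, apply Cauchy--Schwarz, and then need $\Exp_\bx[\max_m\|X_m\|^4] = O(n^2)$, which you get from Doob's $L^4$ maximal inequality together with a fourth-moment bound on the martingale $X_m - X_0$. That bound is correct --- via Burkholder--Davis--Gundy applied componentwise, or by the same conditional-expansion induction the paper carries out in its Lemma~\ref{lem:moment_bound} (note that ``orthogonality of increments'' alone is not sufficient at fourth order; one needs the full martingale-difference property from \eqref{ass:zero_drift} to kill the odd terms, but since you invoke martingale differences and BDG this is a matter of wording, not substance). The trade-off: your route avoids the truncation parameter and yields a clean $O(n^{-1/2})$ rate, but requires the $L^4$ maximal inequality and BDG; the paper's truncation trick gets by with only Doob's $L^2$ inequality and fixed-time moment bounds, a more elementary toolkit that also reuses its Lemma~\ref{lem:moment_bound} verbatim. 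Both arguments rest on the same two pillars --- the uniform fourth moments of \eqref{ass:moments} and the zero-drift martingale structure of \eqref{ass:zero_drift} --- and you correctly identify the latter as the step without which the bound would degrade to a useless $O(n^4)$.
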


The following estimates will be useful in the proof of Lemma~\ref{lem:invariance_conditions1}. 

\begin{lemma}
\label{lem:moment_bound}
Under assumptions~\eqref{ass:moments}--\eqref{ass:cov_form},
there exists a constant $D_0 \in \RP$ such that 
\[
\Exp_\bx \Big[  \|X_m \|^\ell \Big]   \leq D_0 (m^{\ell/2} + \|\bx\|^\ell) \]
for any $\ell\in\{1,\ldots,4\}$ and all $m\in\N$, $\bx \in \X$.
\end{lemma}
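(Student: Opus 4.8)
The plan is to prove the even cases $\ell\in\{2,4\}$ directly by a one-step analysis of the Markov chain, exploiting the zero-drift assumption~\eqref{ass:zero_drift} to cancel the dangerous cross terms, and then to deduce the odd cases $\ell\in\{1,3\}$ from the even ones via the conditional Jensen (Lyapunov) inequality together with the subadditivity of $t\mapsto t^{s}$ for $0<s\le1$. Note that only~\eqref{ass:moments} and~\eqref{ass:zero_drift} will be used: assumption~\eqref{ass:moments} guarantees that $c_\ell:=\sup_{\bx\in\X}\Exp_\bx[\|\Delta\|^\ell]$ is finite for every $\ell\in\{1,2,3,4\}$, since $\Exp_\bx[\|\Delta\|^\ell]\le (\Exp_\bx[\|\Delta\|^4])^{\ell/4}\le c_4^{\ell/4}$ by Jensen's inequality.

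First, for $\ell=2$, I would expand $\|X_{m+1}\|^2=\|X_m\|^2+2\bra X_m,\Delta_m\ket+\|\Delta_m\|^2$ and take the conditional expectation given $X_m$. By the Markov property and~\eqref{ass:zero_drift} the middle term vanishes, because $\Exp[\bra X_m,\Delta_m\ket\mid X_m]=\bra X_m,\mu(X_m)\ket=0$, while the last term is at most $c_2$. Taking full expectations and iterating yields the linear bound $\Exp_\bx[\|X_m\|^2]\le\|\bx\|^2+c_2 m$.

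The main work is the case $\ell=4$, which I expect to be the principal obstacle. Squaring the identity above produces six terms; after conditioning on $X_m$ and using~\eqref{ass:zero_drift}, the terms linear in $\Delta_m$ that are also multiplied by an even power of $X_m$ (in particular the order-$\|X_m\|^3$ term $4\|X_m\|^2\bra X_m,\Delta_m\ket$) vanish. This cancellation is crucial: without it one would pick up cubic growth in $\|X_m\|$ and the recursion would blow up. The surviving terms are controlled by Cauchy--Schwarz, via $\bra X_m,\Delta_m\ket^2\le\|X_m\|^2\|\Delta_m\|^2$ and $|\bra X_m,\Delta_m\ket|\,\|\Delta_m\|^2\le\|X_m\|\,\|\Delta_m\|^3$, leading to a conditional bound of the shape
\[
\Exp[\|X_{m+1}\|^4\mid X_m]\le \|X_m\|^4 + 6c_2\|X_m\|^2 + 4c_4^{3/4}\|X_m\| + c_4 .
\]
Taking expectations, inserting the already-established $\ell\in\{1,2\}$ bounds for $\Exp_\bx[\|X_m\|]$ and $\Exp_\bx[\|X_m\|^2]$, and summing the telescoped recursion over $m$ produces terms of orders $\|\bx\|^4$, $m\|\bx\|^2$, $m^2$, $m\|\bx\|$, $m^{3/2}$ and $m$; each of these is dominated by a constant multiple of $m^2+\|\bx\|^4$ (using, e.g., $m\|\bx\|^2\le\tfrac12(m^2+\|\bx\|^4)$ and, for $m\ge1$, $m^{3/2}\le m^2$), which gives the claim for $\ell=4$.

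Finally, for the odd exponents I would simply apply the conditional Jensen (Lyapunov) inequality, $\Exp_\bx[\|X_m\|]\le(\Exp_\bx[\|X_m\|^2])^{1/2}$ and $\Exp_\bx[\|X_m\|^3]\le(\Exp_\bx[\|X_m\|^4])^{3/4}$, and then use $(a+b)^{s}\le a^{s}+b^{s}$ for $a,b\ge0$ and $0<s\le1$ to convert $(\|\bx\|^2+c_2 m)^{1/2}$ into $\|\bx\|+\sqrt{c_2 m}$ and $(m^2+\|\bx\|^4)^{3/4}$ into $m^{3/2}+\|\bx\|^3$. Enlarging $D_0$ to absorb the various constants then covers all four values of $\ell$ simultaneously.
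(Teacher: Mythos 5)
Your proposal is correct and follows essentially the same route as the paper: a one-step expansion of $\|X_{m+1}\|^2$ and $\|X_{m+1}\|^4$, cancellation of the linear terms via the zero-drift assumption~\eqref{ass:zero_drift}, Cauchy--Schwarz on the surviving cross terms, telescoping, and then the odd cases $\ell\in\{1,3\}$ by Lyapunov/Cauchy--Schwarz plus elementary inequalities. The only (immaterial) differences are that the paper absorbs the $\|X_m\|$-order term into $C_1(1+\|X_m\|^2)$ instead of invoking the $\ell=1$ bound, and handles $\ell=3$ via $\Exp_\bx[\|X_m\|^3]\le\Exp_\bx[\|X_m\|^4]^{1/2}\Exp_\bx[\|X_m\|^2]^{1/2}$ rather than your $(\Exp_\bx[\|X_m\|^4])^{3/4}$.
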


\begin{proof}
First note that $\|\bx + \Delta_m\|^2 - \|\bx\|^2 = 2\langle \bx,\Delta_m\rangle + \|\Delta_m\|^2$. 
Hence by~\eqref{ass:moments} and~\eqref{ass:zero_drift},
there exists a constant 
$C_0>0$ such that 
$$\Exp[\|X_{m+1}\|^2-\|X_m\|^2 \mid X_m]= \Exp[\|\Delta_m\|^2 \mid X_m] \leq C_0 , \text{ for all } m\in\N.$$
The inequality $\Exp_\bx[\|X_{m+1}\|^2]\leq \Exp_\bx[\|X_{m}\|^2]+C_0$ follows, 
implying 
\begin{equation}
\label{eq:square_bound}
\Exp_\bx[\|X_{m}\|^2]\leq \|\bx\|^2+C_0m , \text{ for all } \bx\in\X \text{ and } m\in\N.
\end{equation}
Similarly, 
\begin{align*}
\|\bx + \Delta_m\|^4 - \|\bx\|^4 & = (\|\bx\|^2+2\langle \bx,\Delta_m\rangle + \|\Delta_m\|^2)^2 - \|\bx\|^4 \\
& \leq 6 \|\bx\|^2 \|\Delta_m\|^2+ \| \Delta_m\|^4 + 4 \|\bx\|^2 \langle \bx,\Delta_m\rangle + 4 \|\bx\| \|\Delta_m\|^3.
\end{align*}
Then by~\eqref{ass:moments} and~\eqref{ass:zero_drift} again, we get, for some  $C_1 \in \RP$,
$$
\Exp[\|X_{m+1}\|^4-\|X_m\|^4 \mid X_m]\leq  C_1 (1+\|X_m\|^2)
$$ 
for all $m\in\N$. Taking expectations and applying~\eqref{eq:square_bound}, we find
$$
\Exp_\bx [ \|X_{m+1}\|^4 ] \leq \Exp_\bx[\|X_m\|^4]+ C_2 (1+m+\|\bx\|^2),
$$
for some $C_2 \in \RP$,
which implies that, for some $C_3 \in \RP$,
\begin{align*}
\Exp_\bx[ \|X_m\|^4 ] & = \Exp_\bx [ \| X_0 \|^4 ] + \sum_{k=1}^{m-1} \left( \Exp_\bx [ \| X_{k+1} \|^4 ] - \Exp_\bx [ \| X_k \|^4 ] \right) \\
& \leq \|\bx\|^4+ C_3 (m^2+m\|\bx\|^2), \text{ for all } m\in\N \text{ and } \bx\in\X.
\end{align*}
Since 
$m\|\bx\|^2\leq  m^2+\|\bx\|^4$, the inequality in the lemma for 
$\ell=4$ follows. The case  $\ell=2$ follows from~\eqref{eq:square_bound}. The remaining cases 
are a consequence of these bounds, the Cauchy--Schwarz inequalities $\Exp_\bx\|X_m\|\leq \Exp_\bx[\|X_m\|^2]^{1/2}$ and
\[
\Exp_\bx[ \|X_m\|^3 ] \leq \Exp_\bx[\|X_m\|^4]^{1/2}  \Exp_\bx[\|X_m\|^2]^{1/2}
, \] 
and the fact that $( m^{\ell/2} + \| \bx \|^\ell )^{1/2} \leq 2^{1/2} \max ( m, \| \bx \|^2 )^{\ell/4}$.
\end{proof}

\begin{proof}[Proof of Lemma~\ref{lem:invariance_conditions1}]
Recall that $\Delta = \Delta_0$.
First we prove the statement for $\ell = 2$. Then
\[ \Exp_\bx \max_{0 \leq m \leq k n } \| \Delta_m \|^{4}
\leq \Exp_\bx  \sum_{m=0}^{kn} \| \Delta_m \|^{4} ,\]
where, by the Markov property and~\eqref{ass:moments},
\[ \Exp_\bx [ \| \Delta_m \|^{4}] = \Exp_\bx \Exp_{X_m} [ \| \Delta_m \|^{4}] \leq \sup_{\bx \in \X} \Exp_\bx [ \| \Delta \|^4 ] \leq C_1, \]
for some $C_1 < \infty$. It follows that
\[ 0 \leq \frac{1}{n^2} \Exp_\bx \max_{0 \leq m \leq k n } \| \Delta_m \|^{4} \leq \frac{C_1 (k n+1)}{n^2} \to 0 ,\]
giving the $\ell =2$ case of~\eqref{eq:Lim_max_Delta}. 
Then Lyapunov's inequality shows that
\[ \Exp_\bx \max_{0 \leq m \leq k n } \| \Delta_m \|^{2} \leq \left( \Exp_\bx \max_{0 \leq m \leq k n } \| \Delta_m \|^{4}  \right)^{1/2} ,\]
and the  $\ell =1$ case of~\eqref{eq:Lim_max_Delta} follows.

To prove~\eqref{eq:Lim_max_Delta_Xn}, take $\gamma \in(0,1/2)$ and observe that
\begin{equation}
\label{eq:Delta_m_bound}
\| \Delta_m \|^{2} \leq n^{2\gamma} +   \| \Delta_m \|^{2} \1 { \| \Delta_m \| > n^\gamma } , \text{ for all } m\in\{0,\ldots,k n \}.
\end{equation}
Hence we have from~\eqref{eq:Delta_m_bound} that
\begin{equation}
\label{eq:Delta_Xm}
\max_{0 \leq m \leq k n} \| \Delta_m \|^2  \| X_m \|^2
\leq
n^{2\gamma} \max_{0 \leq m \leq k n}   \| X_m \|^2
+
\sum_{m=0}^{k n}    \| \Delta_m \|^2 \1 { \| \Delta_m \|  > n^\gamma } \| X_m \|^2. 
\end{equation}
To bound the first term on the right-hand side of~\eqref{eq:Delta_Xm}, note that
 chain $X$
is a martingale. Hence, for any $\bx\in\X$, the non-negative process 
$\|X\|$
is a submartingale
and
Doob's $L^2$ inequality (see e.g.~\cite[Theorem 9.4]{gut}) yields 
\begin{equation}
\label{eq:Doob_ineq_X}
\Exp_\bx  \max_{0 \leq m \leq k n}   \| X_m \|^2 \leq 4 \Exp_\bx  \| X_{kn} \|^2. 
\end{equation}
For the second term on the right-hand side of~\eqref{eq:Delta_Xm}, conditioning on $X_m$ gives
\begin{align*}
\Exp_\bx \sum_{m=0}^{kn} \| \Delta_m \|^2 \1 { \| \Delta_m \|  > n^\gamma } \| X_m \|^2
& = \Exp_\bx \sum_{m=0}^{kn}   \| X_m \|^2 \Exp_{X_m}  \left[ \| \Delta_m \|^2 \1 { \| \Delta_m \|  > n^\gamma }   \right] \\
& \leq \Exp_\bx \sum_{m=0}^{kn}   \| X_m \|^2 \sup_{\by \in \X} \Exp_{\by}  \left[ \| \Delta \|^2 \1 { \| \Delta \|  > n^\gamma }   \right] ,
\end{align*}
by the Markov property. Then by~\eqref{ass:moments} we have that
\begin{align*}
\Exp_{\by}  \left[ \| \Delta \|^2 \1 { \| \Delta \|  > n^\gamma }   \right] & =
\Exp_{\by} \left[ \| \Delta \|^4 \| \Delta \|^{-2}  \1 { \| \Delta \|  > n^\gamma }   \right] \\
& \leq n^{-2\gamma} \Exp_{\by} [ \| \Delta \|^4 ] \\
& \leq C_1 n^{-2\gamma} ,
\end{align*}
for $C_1 < \infty$ and all $\by \in \X$.
It follows that 
\begin{equation}
\label{eq:sum_bound}
\Exp_\bx \sum_{m=0}^{kn}  \| \Delta_m \|^2 \1 { \| \Delta_m \|^2 > n^\gamma } \| X_m \|^2 
\leq C_1 n^{-2\gamma} \Exp_\bx \sum_{m=0}^{kn} \| X_m \|^2.
\end{equation}
The bounds in~\eqref{eq:Delta_Xm},~\eqref{eq:Doob_ineq_X} and~\eqref{eq:sum_bound},
together with 
Lemma~\ref{lem:moment_bound},
show that
$$
\Exp_\bx \max_{0 \leq m \leq kn} \| \Delta_m \|^2  \| X_m \|^2 \leq 4 D_0 n^{2\gamma} (k n+\|\bx\|^2) + 
C_1 D_0  n^{-2\gamma}(kn+1)(kn + \|\bx\|^2),
$$	
which in turn implies~\eqref{eq:Lim_max_Delta_Xn} since $\gamma \in (0,1/2)$.
\end{proof}

We need the following result from~\cite[Theorem 2.3]{gmmw}.

\begin{lemma}
Suppose that~\eqref{ass:moments}--\eqref{ass:cov_form} hold. Then the random walk is \emph{null}, i.e., for any bounded $A \subset \R^d$,  
\begin{equation}
\label{eq:null}
 \lim_{n \to \infty} \frac{1}{n} \sum_{k=0}^{n-1} \1 { X_k \in A } = 0, \as  \text{ and in } L^q \text{ for any } q \geq 1. 
\end{equation}
\end{lemma}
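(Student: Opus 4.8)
The plan is to reduce the statement to a single occupation-time bound and then upgrade it to the stated modes of convergence. Any bounded $A$ is contained in a closed ball $B_R := \{ \bx \in \R^d : \|\bx\| \le R \}$, so it suffices to prove, for each fixed $R$, that
\[
\Exp_\bx \bigl[ T_R^{(n)} \bigr] = O\bigl( n^\beta \bigr), \qquad T_R^{(n)} := \sum_{k=0}^{n-1} \1{\|X_k\| \le R},
\]
for some fixed $\beta \in (0,1)$. The starting point is the radial Lamperti expansion: from $\|\bx + \Delta\|^2 = \|\bx\|^2 + 2 \bra \bx, \Delta \ket + \|\Delta\|^2$, together with~\eqref{ass:zero_drift},~\eqref{ass:cov_limit} and~\eqref{ass:cov_form}, one finds that for large $\|\bx\|$ the radial increment $\|X_{k+1}\| - \|X_k\|$ has conditional mean $\approx (V-U)/(2\|\bx\|)$ and conditional second moment $\approx U$; this is the mechanism behind the recurrence classification recalled in Section~\ref{sec:results}.

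The main step is a Foster-type occupation inequality driven by a Lyapunov function that grows sublinearly in $\|\bx\|^2$. I would set $f(\bx) = \phi(\|\bx\|^2)$, where $\phi : \RP \to \RP$ is increasing, behaves like $s^\beta$ for large $s$ with $\beta \in \bigl( \max\{0, 1 - V/(2U)\}, 1 \bigr)$, and is near-linear for small $s$. In the Lamperti regime the one-step mean drift of $f$ is approximately $\phi'(s) V + 2 s U \phi''(s)$ with $s = \|\bx\|^2$; for the power law this equals $\beta s^{\beta - 1}(V - 2U + 2\beta U)$, and the range of $\beta$ makes the bracket strictly positive, so the drift is non-negative far out. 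Near the origin, where $\phi$ is essentially linear, the drift instead equals $\trace M(\bx) \ge v > 0$ by~\eqref{ass:unif_ellip}, up to a concavity penalty that~\eqref{ass:moments} keeps below $v/2$ once the transition radius is taken large. Designing $\phi$ so that the two regimes are joined without the drift changing sign yields a constant $c_0 > 0$ and a valid $f$ with
\[
\Exp\bigl[ f(X_{k+1}) - f(X_k) \mid X_k = \bx \bigr] \;\ge\; c_0 \, \1{\|\bx\| \le R} \qquad \text{for all } \bx \in \X .
\]
Telescoping and the tower property then give $c_0 \, \Exp_\bx[T_R^{(n)}] \le \Exp_\bx[f(X_n)] - f(\bx)$, while Jensen's inequality and Lemma~\ref{lem:moment_bound} give $\Exp_\bx[f(X_n)] = O\bigl( (\Exp_\bx[\|X_n\|^2])^\beta \bigr) = O(n^\beta)$, which is the required bound.

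From here the two modes of convergence follow by soft arguments. Since $0 \le n^{-1} T_R^{(n)} \le 1$ and $q \ge 1$, we have $(n^{-1} T_R^{(n)})^q \le n^{-1} T_R^{(n)}$, so $\Exp_\bx[(n^{-1}T_R^{(n)})^q] \le n^{-1} \Exp_\bx[T_R^{(n)}] = O(n^{\beta-1}) \to 0$, giving the $L^q$ claim. For the almost-sure claim, Markov's inequality gives $\Pr_\bx[T_R^{(n)} \ge \delta n] = O(n^{\beta - 1})$, summable along $n_j = 2^j$; Borel--Cantelli then yields $n_j^{-1} T_R^{(n_j)} \to 0$ a.s., and the monotonicity of $n \mapsto T_R^{(n)}$ together with $n_{j+1}/n_j \to 2$ removes the restriction to the subsequence. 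The main obstacle is the construction of $\phi$: one must check that the one-step drift is non-negative throughout the exterior of $B_R$, balancing the concavity penalty against the radial drift across the transition between the near-linear and power-law regimes, using~\eqref{ass:moments}--\eqref{ass:cov_form}. This is expected to be most delicate in the critical case $2U = V$, where the radial drift nearly degenerates and the margin is thinnest; it is precisely there that the quantitative hypothesis $\eps(r) = O(r^{-\delta})$ in~\eqref{ass:cov_form} is available to control the error terms in the expansion. (In the transient case $2U < V$ the argument simplifies, since $\|X_n\| \to \infty$ a.s. forces $T_R^{(n)}$ to be eventually constant, whence the conclusion also follows by bounded convergence.)
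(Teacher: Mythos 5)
The paper never proves this lemma at all: it is imported verbatim from \cite[Theorem~2.3]{gmmw}. So your self-contained Foster--Lyapunov argument is necessarily a different route, and it is a viable one. Your soft steps are all correct: telescoping the drift inequality $\Exp[f(X_{k+1})-f(X_k)\mid X_k=\bx]\ge c_0\1{\|\bx\|\le R}$ gives $c_0\,\Exp_{\bx}[T_R^{(n)}]\le \Exp_{\bx}[f(X_n)]-f(\bx)$; the bound $\Exp_{\bx}[f(X_n)]=O(n^{\beta})$ follows from Jensen and the $\ell=2$ case of Lemma~\ref{lem:moment_bound} (legitimately available here, since its proof does not use nullity); the inequality $(n^{-1}T_R^{(n)})^q\le n^{-1}T_R^{(n)}$ settles the $L^q$ claim; and Markov plus Borel--Cantelli along $n_j=2^j$, with monotonicity of $n\mapsto T_R^{(n)}$, settles the a.s.\ claim. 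What you gain over the paper is self-containedness; what it costs is the verification of the drift inequality, which you only sketch and which carries essentially all of the work.

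On that step, the point that genuinely decides success or failure is one you do not make explicit: the linear-to-power transition scale $s_1$ must be placed \emph{deep inside} the regime where \eqref{ass:cov_limit}--\eqref{ass:cov_form} are effective, i.e.\ $s_1\gg r_1^2$, where $r_1$ is a radius beyond which $\eps(\cdot)$ is small. With that ordering your stated range of $\beta$ suffices: for $\|\bx\|^2\le r_1^2$ the concavity penalty is $O\bigl((1-\beta)/s_1\bigr)$ (crossing the kink forces $\|\Delta\|^2\gtrsim s_1$, so \eqref{ass:moments} applies) and is beaten by $v$ from \eqref{ass:unif_ellip}, while for $\|\bx\|^2>r_1^2$ the penalty per unit $\phi'$ is at most $2(1-\beta)\bigl(U+o(1)\bigr)$ and is beaten by the bracket $V-2U+2\beta U>0$. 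If instead the transition sat below $r_1$, the argument would fail for part of your range: in a region where only $\trace M(\bx)\ge v$ is known, the increment covariance may be entirely radial, the penalty there is of order $(1-\beta)\sup_{\by\in\X}\Exp_{\by}[\|\Delta\|^2]$, and non-negativity of the drift then requires $\beta>1/2$ rather than merely $\beta>1-V/(2U)$ (note $1-V/(2U)<1/2$ always, since $U<V$). Two further corrections: making the Taylor expansion rigorous with only fourth moments requires truncating $\|\Delta\|$ at scale $\eps_0\|\bx\|$ --- routine, but it is most of the omitted writing; and your parenthetical about the critical case is backwards. At $2U=V$ the bracket equals $2\beta U$, which is bounded away from zero for fixed $\beta$, so this argument needs only $\eps(r)\to 0$ and never uses the rate hypothesis $\eps(r)=O(r^{-\delta})$ in \eqref{ass:cov_form}; that rate is needed in \cite{gmmw} for the recurrence classification at criticality, not for nullity.
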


Write $\be_1, \ldots, \be_d$ for the standard orthonormal basis vectors
in $\R^d$. For convenience, set $\hat \0 := \be_1$.

\begin{lemma}
\label{lem:invariance_conditions2}
Suppose that~\eqref{ass:moments}--\eqref{ass:cov_form} hold
and let $k\in\N$. 
Then, for any linear functional $\phi$ on $d\times d$ matrices, i.e. $\phi:\R^{d\times d}\to \R$,
the following limits in probability hold 
\begin{equation}
\label{eq:47}
  \frac{1}{n}  \sum_{m = 0}^{kn}  \big| \phi M (X_m ) - \phi\sigma^2 ( \hat X_m ) \big| \toP 0,
\end{equation}
\begin{equation}
\label{eq:conv_prob_quad_var_rw}
  \frac{1}{n^2}  \sum_{m = 0}^{kn}  \big| \langle [M (X_m )-\sigma^2 ( \hat X_m )]X_m,X_m\rangle\big| \toP 0.
\end{equation}
\end{lemma}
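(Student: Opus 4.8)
The plan is to prove both limits by exploiting assumption~\eqref{ass:cov_limit}, which controls the discrepancy $\|M(\bx)-\sigma^2(\hat\bx)\|_{\rm op}$ uniformly once $\|\bx\|$ is large, together with the null-recurrence statement~\eqref{eq:null} to handle the contribution from $X_m$ near the origin. First I would fix an arbitrary $R>0$ and split each sum according to whether $\|X_m\|\leq R$ or $\|X_m\|>R$. For the far part, the bound $|\phi M(X_m)-\phi\sigma^2(\hat X_m)|\leq \|\phi\|\,\eps(R)$ (where $\|\phi\|$ is the norm of the linear functional, using that $\eps$ is non-increasing so $\eps(\|X_m\|)\le \eps(R)$) shows that the far contribution to~\eqref{eq:47} is at most $\|\phi\|\,\eps(R)\cdot\frac{kn+1}{n}$, which is $\le \|\phi\|\,\eps(R)(k+o(1))$; letting $R\to\infty$ afterwards drives this to $0$ since $\eps(R)\to0$.

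For the near part of~\eqref{eq:47}, I would bound $|\phi M(X_m)-\phi\sigma^2(\hat X_m)|$ uniformly: $\phi\sigma^2(\hat X_m)$ is bounded since $\sigma^2$ is continuous on the compact sphere $\Sp{d-1}$, and $\phi M(X_m)$ is bounded by $\|\phi\|\trace M(X_m)\le\|\phi\|\sup_\bx\Exp_\bx\|\Delta\|^2$, finite by~\eqref{ass:moments}. Call this uniform bound $C_R$. Then the near contribution is at most $\frac{C_R}{n}\sum_{m=0}^{kn}\1{\|X_m\|\le R}$, and by the null property~\eqref{eq:null} applied to the bounded set $A=\{\|\bx\|\le R\}$ (rescaling the index range from $kn+1$ to $n$ absorbs only a factor $k$), this tends to $0$ in probability — indeed in $L^1$. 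Combining the two parts: for any $R$, $\limsup_n$ of the expectation (or the in-probability limit) of the left side of~\eqref{eq:47} is $\le \|\phi\|\,\eps(R)(k+o(1))$, and sending $R\to\infty$ gives the claim.

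For~\eqref{eq:conv_prob_quad_var_rw} the same splitting applies, but now each summand carries an extra factor $\|X_m\|^2$ through the quadratic form $\langle[M(X_m)-\sigma^2(\hat X_m)]X_m,X_m\rangle$, and there is a crucial division by $n^2$ rather than $n$. I would bound the summand by $\|M(X_m)-\sigma^2(\hat X_m)\|_{\rm op}\|X_m\|^2$. The far part ($\|X_m\|>R$) gives $\eps(R)\cdot\frac{1}{n^2}\sum_{m=0}^{kn}\|X_m\|^2$, and here Lemma~\ref{lem:moment_bound} with $\ell=2$ provides $\Exp_\bx\|X_m\|^2\le D_0(m+\|\bx\|^2)$, so $\frac{1}{n^2}\sum_{m=0}^{kn}\Exp_\bx\|X_m\|^2\le \frac{D_0}{n^2}\sum_{m=0}^{kn}(m+\|\bx\|^2)=O(k^2)$ for fixed starting point $\bx$; multiplying by $\eps(R)$ and letting $R\to\infty$ kills this term. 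The near part ($\|X_m\|\le R$) is bounded by $C\cdot\frac{R^2}{n^2}\sum_{m=0}^{kn}\1{\|X_m\|\le R}$ where $C=\sup_R\|M(\cdot)-\sigma^2(\cdot)\|_{\rm op}$ is finite by the above reasoning; since $\frac{1}{n^2}\sum_{m=0}^{kn}\1{\cdots}\le\frac{k+o(1)}{n}\to0$ (even without invoking nullity, the $n^{-2}$ normalisation suffices), this vanishes.

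The main obstacle is getting the order of quantifiers right: I must take the in-probability limit in $n$ for each fixed $R$ first, and only afterwards send $R\to\infty$, since the decomposition constants $C_R$, $R^2$ blow up as $R\to\infty$. Working at the level of expectations (bounding $\Exp_\bx[\,\cdot\,]$ and invoking the $L^1$ version of~\eqref{eq:null}) sidesteps the need for a delicate in-probability argument, after which Markov's inequality upgrades the $L^1$ bounds to convergence in probability. One subtlety to check is that the starting point $\bx$ is fixed throughout (the process starts deterministically at $X_0$), so the moment bounds from Lemma~\ref{lem:moment_bound} and the null property, both of which hold pointwise in $\bx$, apply directly without any uniformity over $\bx$.
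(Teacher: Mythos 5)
Your proposal is correct and follows essentially the same route as the paper's proof: the same threshold decomposition of each sum, with the far parts controlled by the uniformity in~\eqref{ass:cov_limit}, the near part of~\eqref{eq:47} by the null property~\eqref{eq:null}, the near part of~\eqref{eq:conv_prob_quad_var_rw} by the $n^{-2}$ normalisation alone, and the far part of~\eqref{eq:conv_prob_quad_var_rw} by the $\ell=2$ case of Lemma~\ref{lem:moment_bound} (your working in $L^1$ and upgrading to convergence in probability, rather than arguing in probability directly as the paper does, is an immaterial difference). The only slip is your appeal to continuity of $\sigma^2$ on $\Sp{d-1}$, which is nowhere assumed; the uniform bound you need follows instead from~\eqref{ass:cov_form}, since $\trace \sigma^2(\bu) = V$ together with positive semi-definiteness gives $\| \sigma^2(\bu) \|_{\rm op} \leq V$ for all $\bu \in \Sp{d-1}$.
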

\begin{proof}
Since $\phi$ is necessarily continuous (i.e. $\| \phi \|_{\rm op}<\infty$), the following estimate holds
$$\bigl|\phi M (\bx) - \phi\sigma^2 ( \hat \bx) \bigr|\leq \| \phi \|_{\rm op}\|M (\bx) - \sigma^2 ( \hat \bx)\|_{\rm op}, \text{ for any } \bx\in\R^d.$$
Hence, for any $\eps >0$,
condition~\eqref{ass:cov_limit} entails that there exists $C \in \RP$
such that 
\[
 \bigl| \phi M (X_m ) - \phi\sigma^2 ( \hat X_m ) \bigr| 
\leq \eps, \as, \text{ on } \{ \| X_m \| \geq C \} .\]
By~\eqref{ass:moments} and~\eqref{ass:cov_limit} we have $B := \sup_{\bx\in\X} \| M (\bx ) - \sigma^2( \hat \bx ) \|_{\rm op} < \infty$,
and hence
\begin{align}
\label{eq:48} 
 \frac{1}{n}  \sum_{m = 0}^{kn}  \big| \phi M (X_m ) - \phi\sigma^2 ( \hat X_m ) \big| & \leq
\frac{1}{n} \sum_{m = 0}^{kn} \eps +
\frac{1}{n}  \sum_{m = 0}^{kn} B \| \phi \|_{\rm op}\1 { \| X_m \| \leq C }    \nonumber\\
& \leq  2k \eps + \frac{B\| \phi \|_{\rm op}}{n} 
\sum_{m = 0}^{kn}  \1 { \| X_m \| \leq C } , \text{ for all } n \in \N.\end{align}
Now, by~\eqref{eq:null}, for any $C < \infty$, as $n \to \infty$,
$n^{-1} \sum_{m = 0}^{kn}  \1 { \| X_m \| \leq C } \toP 0$. 
Since $\varepsilon>0$ was arbitrary,
together with~\eqref{eq:48}, this  implies~\eqref{eq:47}.

We now establish~\eqref{eq:conv_prob_quad_var_rw}. First note that 
$$\bigl| \langle [M (\bx )-\sigma^2 ( \hat \bx )]\bx,\bx\rangle\bigr| \leq \| M (\bx ) - \sigma^2( \hat \bx ) \|_{\rm op} \|\bx\|^2,
 \text{ for any } 
\bx\in\X.
$$
Denote by $Z_n$ the random variable in~\eqref{eq:conv_prob_quad_var_rw}. By~\eqref{ass:cov_limit}, for any 
$\varepsilon>0$ there exists a constant $C < \infty$ such that 
for all $n\in\N$
we have 
\begin{align}
Z_n   & \leq
\frac{B}{n^2} 
\sum_{m = 0}^{k n} \| X_m \|^2  \1 { \| X_m \| \leq C } +
\frac{\varepsilon}{n^2} 
\sum_{m = 0}^{k n} \| X_m \|^2  \1 { \| X_m \| > C }
\nonumber\\
& \leq  2 C^2 B k /n + Z_n', \text{ where } 
Z_n':=\frac{\varepsilon}{n^2} 
\sum_{m = 0}^{k n} \| X_m \|^2,
\label{eq:quad_conv_prob} 
\end{align}
and $B$ is defined above the display in~\eqref{eq:48}.
Fix $X_0 = \bx \in \X$. Then by the $\ell =2 $ case of Lemma~\ref{lem:moment_bound},
there is a constant $D_1 < \infty$ (depending on $k$) such that
$$
\Exp_\bx \frac{1}{n^2} 
\sum_{m = 0}^{k n} \| X_m \|^2 \leq D_1, \text{ for all } n\in\N.
$$
In order to prove $Z_n\toP0$, pick arbitrary
$\varepsilon'>0$ and $\varepsilon''>0$, and set 
$\varepsilon:=\varepsilon'\varepsilon''/(4D_1)$.
Markov's inequality implies that
$$
\Pr_\bx[Z_n'>\varepsilon'/2]<\frac{2D_1}{\varepsilon'} \varepsilon < \varepsilon''
, \text{ for all } n\in\N.
$$
Pick $C < \infty$ such that the inequality in~\eqref{eq:quad_conv_prob}  holds for all $n\in\N$.
Then, for any $n\geq 4C^2Bk /\varepsilon'$, the following inequalities hold: 
$$
\Pr_\bx[Z_n>\varepsilon']\leq  \Pr_\bx[2C^2Bk /n+ Z_n'>\varepsilon']\leq \Pr_\bx[ Z_n'>\varepsilon'/2] < \varepsilon''.
$$
Since $\varepsilon''$ is arbitrary, we have that 
$\lim_{n\to\infty}\Pr_\bx[Z_n>\varepsilon']=0$ and the lemma follows.
\end{proof}

Recall that $\tX_n$ in~\eqref{eq:scaled_walk} is a continuous-time process given in terms of the scaled Markov chain $X$,
started at
$X_0=\bx\in\R^d$.
Let $Y_n:=\|\tX_n\|^2$ be the square of the radial component of 
$\tX_n$. Since the square root is continuous, the mapping theorem~\cite[Sec.~2, Thm~2.7]{bill} implies that 
Theorem~\ref{thm:radial_invariance} follows if 
we prove that $Y_n$ converges weakly to $\Besq^V(0)$ on $\cD_1$.
This fact will be established 
using~\cite[Thm~7.4.1., p.~354]{ek}.

Let 
$B_n$
denote the predictable compensator of $Y_n$.
Let $M_n:=Y_n-B_n$ be the corresponding local martingale.
Define $A_n$ as the predictable compensator of the submartingale $M_n^2$.
In particular, both $A_n$ and $B_n$ start at zero.
The following proposition establishes the conditions necessary to apply~\cite[Thm~7.4.1., p.~354]{ek}.

\begin{proposition}
\label{prop:Assumptons_radial_conv}
Suppose that~\eqref{ass:moments}--\eqref{ass:cov_form} hold, and that $U=1$.
Let $T>0$.
The following limits hold for any starting point $X_0=\bx$ in $\X$:
\begin{align}
\label{eq:Y_n_L2_bound}
\lim_{n\to\infty}\Exp_\bx \sup_{t\in[0,T]}|Y_n(t)-Y_n(t-)|^2 & = 0,\\
\label{eq:B_n_L2_bound}
\lim_{n\to\infty}\Exp_\bx \sup_{t\in[0,T]}|B_n (t) - B_n(t-)|^2 & 
=0, \\
\label{eq:lim_A_n_radial}
\lim_{n\to\infty}\Exp_\bx \sup_{t\in[0,T]}|A_n (t) - A_n(t-)|  & = 0.
\end{align}
Furthermore, under $\Pr_\bx[\cdot]$,  we have that
\begin{align}
\label{eq:B_cond}
\sup_{t\in[0,T]}\left|B_n(t)-Vt\right| & \toP 0,\\
\label{eq:A_cond}
\sup_{t\in[0,T]}\left|A_n(t)-\int_0^t4 Y_n(s) \ud s \right| & \toP 0.
\end{align}
\end{proposition}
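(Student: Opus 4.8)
The plan is to obtain an explicit Doob-type decomposition of the embedded chain and then match its compensators against the drift $Vt$ and the quadratic variation $\int_0^t 4 Y_n(s)\,\ud s$ of $\Besq^V$. Working with the filtration $\cF^n_t := \cF_{\lfloor nt\rfloor}$, write $S_m := \|X_m\|^2$ and recall $S_{m+1}-S_m = 2\langle X_m,\Delta_m\rangle + \|\Delta_m\|^2$. By~\eqref{ass:zero_drift}, $\Exp[S_{m+1}-S_m\mid X_m] = \trace M(X_m)$, so that
\[ B_n(t) = \frac1n\sum_{j=0}^{\lfloor nt\rfloor-1}\trace M(X_j), \qquad M_n(t) = \frac1n\Bigl(S_{\lfloor nt\rfloor} - \sum_{j=0}^{\lfloor nt\rfloor-1}\trace M(X_j)\Bigr). \]
Writing $\eta_j := (S_{j+1}-S_j) - \trace M(X_j) = 2\langle X_j,\Delta_j\rangle + (\|\Delta_j\|^2 - \trace M(X_j))$ for the martingale increment, a direct expansion using~\eqref{ass:zero_drift} gives
\[ \Exp[\eta_j^2\mid X_j] = 4\langle M(X_j)X_j, X_j\rangle + 4\Exp_{X_j}\bigl[\langle X_j,\Delta\rangle\|\Delta\|^2\bigr] + \Var_{X_j}(\|\Delta\|^2), \]
and hence the compensator of $M_n^2$ is $A_n(t) = n^{-2}\sum_{j=0}^{\lfloor nt\rfloor-1}\Exp[\eta_j^2\mid X_j]$. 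All five claims reduce to estimates on these three sums.

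Next I would dispatch the jump conditions~\eqref{eq:Y_n_L2_bound}--\eqref{eq:lim_A_n_radial}. Each of $Y_n$, $B_n$, $A_n$ is a pure-jump step process whose jump at $t=m/n$ is a single summand, so the relevant suprema over $[0,T]$ are maxima over $0\le m\le \lfloor nT\rfloor$. For~\eqref{eq:Y_n_L2_bound}, the jump of $Y_n$ is $n^{-1}(S_m-S_{m-1})$, and $(S_m-S_{m-1})^2\le 8\|X_{m-1}\|^2\|\Delta_{m-1}\|^2 + 2\|\Delta_{m-1}\|^4$; taking $\Exp_\bx$ of $n^{-2}$ times the maximum, the two terms vanish by~\eqref{eq:Lim_max_Delta_Xn} (with compact set $\{\bx\}$) and the $\ell=2$ case of~\eqref{eq:Lim_max_Delta}, respectively, choosing $k=\lceil T\rceil$. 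For~\eqref{eq:B_n_L2_bound} the jump of $B_n$ is $n^{-1}\trace M(X_{m-1})$, bounded deterministically since $\trace M(\bx)=\Exp_\bx[\|\Delta\|^2]\le \Exp_\bx[\|\Delta\|^4]^{1/2}$ is uniformly bounded by~\eqref{ass:moments}. For~\eqref{eq:lim_A_n_radial} the jump of $A_n$ is $n^{-2}\Exp[\eta_{m-1}^2\mid X_{m-1}]$, which by the displayed decomposition and~\eqref{ass:moments} is at most $C_\ast(1+\|X_{m-1}\|^2)$; taking the maximum and using Doob's inequality~\eqref{eq:Doob_ineq_X} together with the $\ell=2$ case of Lemma~\ref{lem:moment_bound} bounds the expectation by $O(n^{-1})$.

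For the drift limit~\eqref{eq:B_cond}, I would use~\eqref{ass:cov_form}: since $\trace\sigma^2(\hat X_j)=V$ exactly, we have $\trace M(X_j)-V = \phi M(X_j) - \phi\sigma^2(\hat X_j)$ with $\phi=\trace$ a linear functional. Thus $\sup_{t\in[0,T]}|B_n(t) - V\lfloor nt\rfloor/n|$ is bounded by $n^{-1}\sum_{m=0}^{\lfloor nT\rfloor}|\phi M(X_m)-\phi\sigma^2(\hat X_m)|$, which tends to $0$ in probability by~\eqref{eq:47}; combined with $V\lfloor nt\rfloor/n\to Vt$ uniformly on $[0,T]$, this gives~\eqref{eq:B_cond}.

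The quadratic-variation limit~\eqref{eq:A_cond} is the crux. The key algebraic point is that, with $U=1$, assumption~\eqref{ass:cov_form} yields $\langle\sigma^2(\hat\bx)\bx,\bx\rangle = \|\bx\|^2\,\hat\bx^\tra\sigma^2(\hat\bx)\hat\bx = \|\bx\|^2$, so that
\[ 4\langle M(X_j)X_j,X_j\rangle - 4\|X_j\|^2 = 4\langle[M(X_j)-\sigma^2(\hat X_j)]X_j,X_j\rangle. \]
Summing and dividing by $n^2$, the total contribution of these differences is controlled, uniformly in $t\le T$, by~\eqref{eq:conv_prob_quad_var_rw} and so tends to $0$ in probability. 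The remaining two terms in $\Exp[\eta_j^2\mid X_j]$ are lower order: $\Var_{X_j}(\|\Delta\|^2)\le \Exp_{X_j}[\|\Delta\|^4]$ is uniformly bounded by~\eqref{ass:moments}, contributing $O(t/n)$ deterministically, while $|\Exp_{X_j}[\langle X_j,\Delta\rangle\|\Delta\|^2]|\le \|X_j\|\,\Exp_{X_j}[\|\Delta\|^4]^{3/4}$, and the $\ell=1$ case of Lemma~\ref{lem:moment_bound} gives $\Exp_\bx\|X_j\|=O(j^{1/2}+\|\bx\|)$, so its contribution is $O(n^{-1/2})$ in $L^1$. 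Finally I would compare the Riemann-type sum $4n^{-2}\sum_{j=0}^{\lfloor nt\rfloor-1}\|X_j\|^2$ with $\int_0^t 4Y_n(s)\,\ud s$, whose piecewise-constant integrand equals $n^{-1}\|X_j\|^2$ on $[j/n,(j+1)/n)$; the two agree up to a single boundary cell of size at most $4n^{-2}\max_{0\le m\le \lfloor nT\rfloor}\|X_m\|^2$, whose expectation is $O(n^{-1})$ by Doob~\eqref{eq:Doob_ineq_X} and Lemma~\ref{lem:moment_bound}. Assembling these estimates yields~\eqref{eq:A_cond}. The main obstacle throughout is the correct identification and algebraic reduction of $A_n$: once the quadratic form $\langle M(X_j)X_j,X_j\rangle$ is exposed and reduced, via $U=1$, to $\|X_j\|^2$ up to the error governed by~\eqref{eq:conv_prob_quad_var_rw}, the residual cross- and variance terms are genuinely negligible.
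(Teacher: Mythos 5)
Your proposal is correct and follows essentially the same route as the paper's proof: the same identification of $B_n$ via $\trace M(X_m)$, the same jump bounds handled by Lemma~\ref{lem:invariance_conditions1}, Doob's inequality and Lemma~\ref{lem:moment_bound}, the same use of \eqref{eq:47} with $\phi=\trace$ for \eqref{eq:B_cond}, and the same reduction of $A_n$ (your $\Exp[\eta_j^2\mid X_j]$ decomposition is algebraically identical to the paper's expansion of $\Exp[(Y_n\text{-jump})^2\mid\cF_{k-1}]-(B_n\text{-jump})^2$) with the key identity $\langle\sigma^2(\hat\bx)\bx,\bx\rangle=\|\bx\|^2$ from $U=1$ and the error controlled by \eqref{eq:conv_prob_quad_var_rw}. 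The only differences are cosmetic (working with the martingale increment $\eta_j$ explicitly, and bounding $\Exp[\|\Delta\|^3]$ by $\Exp[\|\Delta\|^4]^{3/4}$ rather than introducing a uniform constant), so there is nothing to correct.
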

\begin{proof}
Without loss of generality we may assume that $T=1$.
By definition, $B_n$ is a piece-wise constant right-continuous process started at zero with jumps at
 $t= k/n$, $k\in\{1,\ldots,n\}$, given by
\begin{align}
\label{eqn:compensator_of_Y}
 B_n (t) - B_n(t-) & = \frac{1}{n}  \Exp[\|X_{k}\|^2- \|X_{k-1}\|^2 \mid  X_{k-1}] \nonumber\\
& = \frac{2}{n} \Exp[  \langle X_{k-1} , \Delta_{k-1} \rangle  \mid X_{k-1}] + \frac{1}{n} \Exp[ \|\Delta_{k-1}\|^2 \mid X_{k-1}] \nonumber\\
& = \frac{1}{n} \Exp[ \|\Delta_{k-1}\|^2 \mid X_{k-1}] ,
\end{align}
using~\eqref{ass:zero_drift}, and writing $B_n(t-)=\lim_{s\uparrow t}B_n(s)$.
By~\eqref{ass:moments}, $\Exp[ \|\Delta_{k-1}\|^2 \mid X_{k-1}]$ is uniformly bounded.
Hence
$$\sup_{t\in[0,1]}|B_n (t) - B_n(t-)|^2 = \frac{1}{n^2} \max_{1 \leq k \leq n} \left| \Exp[ \|\Delta_{k-1}\|^2|X_{k-1}]\right|^2 $$
is a sequence of bounded random variables converging to zero point-wise.
Therefore the limit in~\eqref{eq:B_n_L2_bound} follows. 

Similarly, the jumps of $Y_n$ occur at times $t=k/n$ (where $k\in\{1,\ldots,n\}$) and,
writing   $Y_n(t-)=\lim_{s\uparrow t}Y_n(s)$ as usual,
can be bounded as follows:
\begin{align}
\label{Y-squared}
|Y_n(t)-Y_n(t-)|^2 & = \frac{1}{n^2} (\|X_{k}\|^2-\|X_{k-1}\|^2)^2 \nonumber\\
& \leq  \frac{1}{n^2} (\|\Delta_{k-1}\|^2+2\|X_{k-1}\|\|\Delta_{k-1}\|)^2 \nonumber\\
& \leq    \frac{2}{n^2} (\|\Delta_{k-1}\|^4+4\|X_{k-1}\|^2\|\Delta_{k-1}\|^2), 
\end{align}
using the inequality $(x+y)^2 \leq 2 (x^2 + y^2)$.
We therefore find that
\begin{align*}
\Exp_\bx \sup_{t\in[0,1]}|Y_n(t)-Y_n(t-)|^2 
& \leq \frac{2}{n^2} (\Exp_\bx \max_{1\leq k\leq n }\|\Delta_{k-1}\|^4+4\Exp_\bx \max_{1\leq k\leq n }\|X_{k-1}\|^2\|\Delta_{k-1}\|^2).
\end{align*}
Hence~\eqref{eq:Lim_max_Delta}--\eqref{eq:Lim_max_Delta_Xn} in Lemma~\ref{lem:invariance_conditions1}
imply~\eqref{eq:Y_n_L2_bound}.

The process $A_n$ is piece-wise constant and right-continuous with jumps 
$A_n(t) - A_n (t-)$ at $t=k/n$, $k\in\{1,\ldots,n\}$,
with $A_n(t-)=\lim_{s\uparrow t}A_n(s)$, satisfying
\begin{align}
\label{A-jumps} A_n (t) - A_n(t-)  & =  \Exp[M_n(t)^2-M_n(t-)^2 \mid \cF_{k-1}]  \nonumber\\
& =  \Exp[(M_n(t)-M_n(t-))^2 \mid \cF_{k-1}] \nonumber\\
& = \Exp [ ( Y_n (t) - Y_n(t-))^2 \mid \cF_{k-1}] - ( B_n (t) - B_n (t-) )^2 , \end{align}
using the fact that $B_n (t) - B_n (t-) = \Exp [  Y_n (t) - Y_n(t-) \mid \cF_{k-1} ]$,
where 
$\cF_{k-1}$ is the $\sigma$-algebra generated by $X_0, X_1,\ldots,X_{k-1}$.
Hence by~\eqref{A-jumps} with~\eqref{eqn:compensator_of_Y} and~\eqref{Y-squared}, we find that
\begin{align*}
 | A_n (t) - A_n(t-) | & \leq  \Exp[(Y_n(t)-Y_n(t-))^2 \mid X_{k-1}] + (B_n(t)-B_n(t-))^2 \\
& \leq \frac{2}{n^2} \left( \Exp [ \| \Delta_{k-1}\|^4 \mid X_{k-1} ] + 4 \| X_{k-1} \|^2 \Exp [ \| \Delta_{k-1} \|^2 \mid X_{k-1} ] \right) \\
& {} \qquad {} + \frac{1}{n^2} \Exp [ \| \Delta_{k-1} \|^2 \mid X_{k-1} ]^2 ,
\end{align*}
for $t = k/n$, $k\in\{1,\ldots,n\}$.
By~\eqref{ass:moments} we have that there exists a constant $C_1 < \infty$ such that both  $\Exp [ \| \Delta_{k-1} \|^2 \mid X_{k-1} ]$
and $\Exp [ \| \Delta_{k-1}\|^4 \mid X_{k-1} ]$ are bounded by $C_1$, a.s., so that
\[  \sup_{t \in [0,1] } | A_n (t) - A_n(t-) |  \leq \frac{2C_1 + C_1^2}{n^2}   + \frac{8C_1}{n^2} \max_{1 \leq k \leq n}  \| X_{k-1} \|^2  .\]
By Doob's $L^2$ submartingale inequality we have $\Exp_\bx[\max_{1 \leq k \leq n}\|X_{k-1}\|^2]\leq 
4\Exp_\bx\|X_{n}\|^2$, and then~\eqref{eq:lim_A_n_radial} follows from the $\ell =2$ case of  Lemma~\ref{lem:moment_bound}.

We now prove the limit in~\eqref{eq:B_cond}.
Note that~\eqref{eqn:compensator_of_Y} and the fact that $\trace M ( \bx ) = \Exp_\bx [ \| \Delta \|^2 ]$ implies that,
with the usual convention that an empty sum is zero,
\begin{equation}
\label{eq:B_def}
B_n(t) = \frac{1}{n} \sum_{m=0}^{\lfloor nt \rfloor -1} \trace M ( X_m ) .
\end{equation}
By~\eqref{ass:cov_form} it holds that 
$\trace \sigma^2(\bu)  = V$
for all $\bu \in \Sp{d-1}$. 
Hence by~\eqref{eq:B_def} we find
$$
|B_n(t)-Vt|\leq \frac{V}{n} +\frac{1}{n} \sum_{m=0}^{\lfloor nt \rfloor -1} |\trace M ( X_m ) - \trace \sigma^2(\hat X_m)|
$$
and, as trace is a linear functional on square matrices,~\eqref{eq:47} in Lemma~\ref{lem:invariance_conditions2} yields
\begin{align*}
\sup_{t\in[0,1]}\left|B_n(t)-Vt\right|
\leq \frac{V}{n} +\frac{1}{n} \sum_{m=0}^n|\trace M ( X_m ) - \trace \sigma^2(\hat X_m)|
\toP 0.
\end{align*}

Finally, we establish~\eqref{eq:A_cond}.
From~\eqref{A-jumps} with~\eqref{eqn:compensator_of_Y} and the equality in~\eqref{Y-squared}, we find that
\begin{align*}
A_n( t ) - A_n (t-) & =  \frac{1}{n^2} \Exp [ ( 2 \langle X_{k-1} , \Delta_{k-1} \rangle + \| \Delta_{k-1} \|^2 )^2 \mid X_{k-1}] - \frac{1}{n^2} \Exp[ \|\Delta_{k-1}\|^2 \mid X_{k-1}]^2  \\
& = \frac{4}{n^2} \Exp [ \langle X_{k-1} , \Delta_{k-1} \rangle^2 \mid X_{k-1} ]
+ \frac{4}{n^2} \Exp [  \langle X_{k-1} , \Delta_{k-1} \rangle \| \Delta_{k-1} \|^2 \mid X_{k-1}] \\
& {} \qquad {} 
+ \frac{1}{n^2} \Exp[ \|\Delta_{k-1}\|^4 \mid X_{k-1}] - \frac{1}{n^2} \Exp[ \|\Delta_{k-1}\|^2 \mid X_{k-1}]^2   .
\end{align*}
For any $t\in[0,1]$, denote 
$$D_n(t):=\frac{1}{n^2} \sum_{k=0}^{\lfloor nt \rfloor -1} 4\Exp[\langle X_k,\Delta_k\rangle^2 \mid X_k].$$
It follows that
\begin{align*}
A_n (t) - D_n(t)   =  \frac{1}{n^2} \sum_{k=0}^{\lfloor nt \rfloor -1} 
\left(4\Exp[\langle X_k,\Delta_k\rangle\|\Delta_k\|^2 \mid X_k] +
\Exp[\|\Delta_k\|^4 \mid X_k] - \Exp[\|\Delta_k\|^2 \mid X_k]^2\right).
\end{align*}
By~\eqref{ass:moments}, there exists a constant $C_2< \infty$ bounding uniformly all  
$\Exp[\|\Delta_k\|^\ell \mid X_k]$ for $2 \leq \ell \leq 4$ and 
 all $k\in\{0,\ldots,n\}$. Furthermore, it holds that
$$
\frac{1}{n^2} \left| \sum_{k=0}^n 
\Exp[\langle X_k,\Delta_k\rangle\|\Delta_k\|^2 \mid X_k] \right|
\leq 
\frac{1}{n^2} \sum_{k=0}^n 
\|X_k\|
\Exp[\|\Delta_k\|^3 \mid X_k].
$$
Hence, by the $\ell =1$ case of
Lemma~\ref{lem:moment_bound},
we find that
\begin{align*}
\sup_{t\in[0,1]}|A_n (t) - D_n(t)|   \leq  \frac{C_2+C_2^2}{n}+  \frac{4C_2}{n^2} \sum_{k=0}^n  \|X_k\| \toP 0.
\end{align*}
It remains to show 
$\sup_{t\in[0,1]}\bigl|D_n(t)-\int_0^t4 Y_n(s) \ud s \bigr|\toP 0$.
With this in mind, note that the following identities hold for all $k\in\{0,\ldots,n\}$:
$$
\Exp[\langle X_k,\Delta_k\rangle^2 \mid X_k] = \langle M(X_k)X_k,X_k\rangle, \text{ and }
\|X_k\|^2= \langle \sigma^2(\hat X_k)X_k,X_k\rangle;
$$
the latter is a consequence of \eqref{ass:cov_form}, which states that
$\langle \sigma^2 ( \hat X_k) \hat X_k, \hat X_k \rangle = U$, and the assumption that $U=1$. 
Since $Y_n (t) = n^{-1} \| X_{\lfloor nt \rfloor} \|^2$, we have that
\begin{align*} \int_0^t Y_n (s) \ud s & =  \sum_{k=0}^{\lfloor nt \rfloor -1} \int_{\frac{k}{n}}^{\frac{k+1}{n}} Y_n (s) \ud s + \int_{n^{-1} \lfloor nt \rfloor}^t Y_n (s) \ud s \\
& = \frac{1}{n^2} \sum_{k=0}^{\lfloor nt \rfloor -1} \| X_k \|^2 + \frac{nt - \lfloor nt \rfloor}{n^2} \| X_{\lfloor nt \rfloor} \|^2 .\end{align*}
Hence for any $t\in[0,1]$ it holds that
\begin{align}
\nonumber
\left|D_n(t)-\int_0^t4 Y_n(s) \ud s \right| & \leq \frac{4}{n^2} \|X_{\lfloor nt\rfloor}\|^2 +
 \frac{4}{n^2}  \sum_{k = 0}^{\lfloor nt \rfloor -1}  \big| \langle [M (X_k )-\sigma^2 ( \hat X_k )]X_k,X_k\rangle\big|\\
& \leq \frac{4}{n^2} \max_{0\leq k \leq n}\|X_k\|^2 +
 \frac{4}{n^2}  \sum_{k = 0}^n  \big| \langle [M (X_k )-\sigma^2 ( \hat X_k )]X_k,X_k\rangle\big|.
\label{eqn:quad_Var_prob_conv_zero}
\end{align}
Doob's $L^2$ submartingale inequality and the $\ell =2 $ case of Lemma~\ref{lem:moment_bound} 
imply that the first term on the right-hand side of~\eqref{eqn:quad_Var_prob_conv_zero}
converges to zero in $L^1$ and hence in probability. 
The second term converges to zero in probability 
by~\eqref{eq:conv_prob_quad_var_rw} in Lemma~\ref{lem:invariance_conditions2}.
\end{proof}

\begin{proof}[Proof of Theorem~\ref{thm:radial_invariance}.]
As noted in Remark~(ii) after the theorem, 
it is sufficient to prove that 
$Y_n\Rightarrow Y$, where 
$Y$ is~$\Besq^V(0)$.
Let
$g:\R\to\RP$ be given by
$g(x):=\sqrt{|x|}$
and
note that $Y$ satisfies the SDE 
$\ud Y_t = V\ud t + 2 g(Y_t)\ud B_t$, 
where $Y_0=0$.
It is easy to see that 
$|g(x)-g(y)|^2\leq |x-y|$ for all
$x,y\in\R$.
Hence pathwise uniqueness for this SDE holds for any starting point $Y_0=x_0\in\R$
by~\cite[Ch.~IX, Thm~(3.5)(ii)]{ry} (use $\rho:(0,\infty)\to(0,\infty)$, given by
$\rho(z)=4z$). Hence, by the Yamada--Watanabe theorem~\cite[Ch.~IX, Thm~(1.7)]{ry},
the uniqueness in law holds. Thus the $\cC_1$ martingale problem for $(H,\delta_{0})$
is well-posed, where $Hf:=Vf'+2g^2 f''$ for any smooth $f:\R\to\R$
and $\delta_{0}$ is the Dirac delta measure on $\R$ concentrated at zero; here $\cC_1$ denotes
the space of continuous functions from $\R$ to $\R$.
Furthermore,
any solution of this $\cC_1$ martingale problem has non-negative trajectories
because of the support of the law of $\Besq^V(0)$ (alternatively the positivity
of the paths follows from the comparison theorem~\cite[Ch.~IX, Thm~(3.7)]{ry} and the 
fact that $\Besq^0(0)$ is equal to zero at all times).
Since the drift in $H$ is constant
and $g^2$ is continuous and non-negative on $\R$, Proposition~\ref{prop:Assumptons_radial_conv}
and~\cite[Thm~7.4.1., p.~354]{ek} imply that 
$Y_n$ converges weakly to the unique solution $Y$
of the 
$\cC_1$ martingale problem 
for $(H,\delta_{0})$.
This proves Theorem~\ref{thm:radial_invariance}. 
\end{proof}


\begin{thebibliography}{99}


\bibitem{bill} P.\ Billingsley, \emph{Convergence of Probability Measures}, 2nd edition. Wiley, New 
York, 1999.

\bibitem{cherny}
A.S.\ Cherny, On the strong and weak solutions of stochastic differential equations governing Bessel 
processes.
\emph{Stochastics and Stochastics Reports} {\bf 70} (2000) 213--219.

\bibitem{ek} S.N.\ Ethier and T.G.\ Kurtz, 
\emph{Markov Processes.
Characterization and Convergence.} 
John Wiley \& Sons, Inc., New York, 1986. 

\bibitem{gmmw} N.\ Georgiou, M.V.\ Menshikov, A.\ Mijatovi\'c, and A.R.\ Wade, Anomalous
  recurrence properties of many-dimensional zero-drift random walks.
	\emph{Probability, Analysis
  and Number Theory (Adv.\ Appl.\ Prob.\ Spec.\ Vol.\ {\bf 48A})}. 

\bibitem{gut} A.\ Gut,
\emph{Probability: A Graduate Course.}
Springer, 2005.

\bibitem{lamp2} J.\ Lamperti, A new class
of probability limit theorems.
\emph{J.\ Math. Mech.}\ {\bf 11} (1962) 749--772.

\bibitem{ry} D.\ Revuz and M.\ Yor, 
\emph{Continuous Martingales and {B}rownian Motion.} 3rd ed.,
 Springer-Verlag, Berlin, 1999. 

\end{thebibliography}
\end{document}